\newcommand{\D}{ \mathbb{D}}
\newcommand{\dD}{ \partial\mathbb{D}}
\newcommand{\norm}[1]{\| #1 \|}
\newcommand{\inner}[1]{\langle #1 \rangle}
\renewcommand{\phi}{\varphi}
\newtheorem{Corollary}{Corollary}
\newtheorem{Theorem}{Theorem}
\newtheorem{Lemma}{Lemma}
\theoremstyle{definition}
\newtheorem*{Question}{Question}
\begin{document}
\bibliographystyle{amsplain}

    \title{The Norm of a Truncated Toeplitz Operator}

    \author{Stephan Ramon Garcia}
    \address{   Department of Mathematics\\
            Pomona College\\
            Claremont, California\\
            91711 \\ USA}
    \email{Stephan.Garcia@pomona.edu}
    \urladdr{http://pages.pomona.edu/\textasciitilde sg064747}

    \author{William T. Ross}
\address{   Department of Mathematics and Computer Science\\
            University of Richmond\\
            Richmond, Virginia\\
            23173 \\ USA}
    \email{wross@richmond.edu}
    \urladdr{http://facultystaff.richmond.edu/~wross}

    \keywords{Toeplitz operator, model space, truncated Toeplitz operator, reproducing kernel, complex symmetric operator,  conjugation.}
    \subjclass[2000]{47A05, 47B35, 47B99}

    \thanks{First author partially supported by National Science Foundation Grant DMS-0638789.}

    \begin{abstract}
	We prove several lower bounds for the norm of a truncated Toeplitz operator
	and obtain a curious relationship between the $H^2$ and $H^{\infty}$ norms of functions
	in model spaces.
    \end{abstract}

\maketitle

\section{Introduction}

In this paper, we continue the discussion initiated in \cite{NLEPHS} concerning the norm of a truncated Toeplitz operator.
In the following, let $H^2$ denote the classical Hardy space of the open unit disk $\D$ and $K_{\Theta} := H^2 \cap (\Theta H^2)^{\perp}$, where $\Theta$ is an inner function, denote one of the so-called Jordan model spaces \cite{CR, DSS, Nikolski}.  If $H^{\infty}$ is the set of all bounded analytic  functions on $\D$, the space
$K_{\Theta}^{\infty}:=H^{\infty} \cap K_{\Theta}$ is norm dense in $K_{\Theta}$
(see \cite[p.~83]{CR} or \cite[Lem.~2.3]{Sarason}).
If $P_{\Theta}$ is the orthogonal projection from
$L^2 := L^2(\dD, \frac{|d \zeta|}{2 \pi})$ onto $K_{\Theta}$ and $\phi \in L^2$, then the operator
\begin{equation*}
	A_{\phi} f := P_{\Theta} (\phi f), \quad f \in K_{\Theta}^{\infty},
\end{equation*}
is densely defined on $K_{\Theta}$ and is called a \textit{truncated Toeplitz operator}.
Various aspects of these operators were studied in \cite{MR2440673, G, NLEPHS, Sarason, MR2418122}.

If $\|\cdot\|$ is the norm on $L^2$, we let
\begin{equation}\label{eq-NormDef}
\|A_{\phi}\| := \sup\{\|A_{\phi} f\|: f \in K_{\Theta}^{\infty}, \|f\| = 1\}
\end{equation}
and note that this quantity is finite if and only if $A_{\phi}$ extends to a bounded operator on $K_{\Theta}$.
 When $\phi \in L^{\infty}$, the set of bounded measurable functions on $\dD$,  we have the basic estimates
\begin{equation*}
0 \leq \|A_{\phi}\| \leq \|\phi\|_{\infty}.
\end{equation*}
However, it is known that equality can hold, in nontrivial ways, in either of the inequalities above and hence
finding the norm of a truncated Toeplitz operator can be difficult.
Furthermore, it turns out that there are many unbounded symbols $\phi \in L^2$ which yield
bounded operators $A_{\phi}$.  Unlike the situation for classical Toeplitz operators
on $H^2$, for a given $\phi \in L^2$, there many $\psi \in L^2$ for which $A_{\phi} = A_{\psi}$ \cite[Thm.~3.1]{Sarason}.

For a given symbol $\phi \in L^2$ and inner function $\Theta$, lower bounds
on the quantity \eqref{eq-NormDef} are useful in answering the following nontrivial questions:
\begin{enumerate}\addtolength{\itemsep}{0.5\baselineskip}
	\item is $A_{\phi}$ unbounded?
	\item if $\phi \in L^{\infty}$, is $A_{\phi}$ norm-attaining (i.e., is $\norm{A_{\phi}} = \norm{\phi}_{\infty}$)?
\end{enumerate}

When $\Theta$ is a finite Blaschke product and $\phi \in H^{\infty}$, the paper \cite{NLEPHS} computes $\|A_{\phi}\|$ and gives necessary and sufficient conditions as to when $\|A_{\phi}\| = \|\phi\|_{\infty}$. The purpose of this short note
is to give a few lower bounds on $\|A_{\phi}\|$ for general inner functions $\Theta$ and general $\phi \in L^2$. Along the way,
we obtain a curious relationship (Corollary \ref{CorollaryOddRelationship})
between the $H^2$ and $H^{\infty}$ norms of functions in $K_{\Theta}^{\infty}$.

\section{Lower bounds derived from Poisson's formula}
	
	For $\phi \in L^2$, let
	\begin{equation} \label{PIF}
		(\mathfrak{P} \phi)(z)
		:= \int_{\dD} \frac{1 - |z|^2}{|\zeta - z|^2} \phi(\zeta) \frac{|d \zeta|}{2 \pi}, \quad z \in \D,
	\end{equation}
	be the standard Poisson extension of $\phi$ to $\D$.   For $\phi \in C(\dD)$, the continuous functions on $\dD$, recall that
	$\mathfrak{P} \phi$ solves the classical Dirichlet problem with boundary data $\phi$.
	Also note that
	\begin{equation*}
		k_{\lambda}(z) := \frac{1 - \overline{\Theta(\lambda)} \Theta(z)}{1 - \overline{\lambda} z},
		\quad \lambda, z \in \D,
	\end{equation*}
	is the reproducing kernel for $K_{\Theta}$ \cite{Sarason}.

	Our first result provides a general lower bound for $\norm{A_{\phi}}$ which yields a number of useful corollaries:

	\begin{Theorem} \label{PROP-MAIN}
		If $\phi \in L^2$, then
		\begin{equation} \label{eq-General-Lower}
			\sup_{\lambda \in \D} \frac{1 - |\lambda|^2}{1 - |\Theta(\lambda)|^2} \left|\int_{\dD} \phi(z)
			\left|\frac{\Theta(z) - \Theta(\lambda)}{z - \lambda}\right|^2 \frac{|dz|}{2 \pi}\right| \,\leq\, \norm{A_{\phi}}.
		\end{equation}
		In other words,
		\begin{equation*}
			\sup_{\lambda \in \D} \left| \int_{\dD} \phi(z) d\nu_{\lambda}(z) \right| \leq \norm{A_{\phi}}
		\end{equation*}
		where
		\begin{equation*}
			d\nu_{\lambda}(z) :=  \frac{1 - |\lambda|^2}{1 - |\Theta(\lambda)|^2}
			\left|\frac{\Theta(z) - \Theta(\lambda)}{z - \lambda}\right|^2 \frac{|dz|}{2 \pi}
		\end{equation*}
		is a family of probability measures on $\dD$ indexed by $\lambda \in \D$.
	\end{Theorem}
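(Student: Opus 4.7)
The plan is to test the bilinear form of $A_\phi$ against the normalized reproducing kernels of $K_{\Theta}$. Since $\Theta$ is inner, for each $\lambda \in \D$ the function $k_\lambda$ is bounded on $\D$ (the denominator $1 - \bar\lambda z$ is bounded away from zero and $\Theta$ is bounded by $1$), so $k_\lambda \in K_{\Theta}^{\infty}$ and the normalized kernel $\hat k_\lambda := k_\lambda / \|k_\lambda\|$ is an admissible test vector in the definition \eqref{eq-NormDef}. The reproducing property immediately gives $\|k_\lambda\|^2 = k_\lambda(\lambda) = (1-|\Theta(\lambda)|^2)/(1-|\lambda|^2)$.

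The key algebraic step is to rewrite $|k_\lambda(z)|^2$ on $\dD$ in the form appearing in the theorem. Since $|z|=|\Theta(z)|=1$ a.e.\ on $\dD$, multiplying numerator and denominator by unimodular factors gives
\begin{equation*}
  |1 - \overline{\Theta(\lambda)}\Theta(z)| = |\overline{\Theta(z)} - \overline{\Theta(\lambda)}| = |\Theta(z) - \Theta(\lambda)|, \qquad |1 - \bar\lambda z| = |z - \lambda|,
\end{equation*}
so that on $\dD$ one has $|k_\lambda(z)|^2 = |(\Theta(z)-\Theta(\lambda))/(z-\lambda)|^2$. Dividing by $\|k_\lambda\|^2$ yields exactly the density of the putative measure $d\nu_\lambda$, which is therefore a probability measure since $\|\hat k_\lambda\|=1$.

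Now I would compute the quadratic form. Because $P_\Theta$ is self-adjoint with range $K_\Theta$, and $\hat k_\lambda \in K_\Theta$,
\begin{equation*}
  \langle A_\phi \hat k_\lambda, \hat k_\lambda \rangle
  = \langle P_\Theta(\phi \hat k_\lambda), \hat k_\lambda \rangle
  = \langle \phi \hat k_\lambda, \hat k_\lambda \rangle
  = \int_{\dD} \phi(z)\,|\hat k_\lambda(z)|^2 \,\frac{|dz|}{2\pi}
  = \int_{\dD} \phi\, d\nu_\lambda.
\end{equation*}
Cauchy--Schwarz together with $\|\hat k_\lambda\|=1$ yields $|\langle A_\phi \hat k_\lambda, \hat k_\lambda\rangle| \le \|A_\phi \hat k_\lambda\| \le \|A_\phi\|$, and taking the supremum over $\lambda \in \D$ gives the desired inequality \eqref{eq-General-Lower}.

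There is no serious obstacle; the only technical care needed is the boundary identity $|k_\lambda(z)|^2 = |(\Theta(z)-\Theta(\lambda))/(z-\lambda)|^2$ on $\dD$, which relies on $\Theta$ being inner, and the observation that $k_\lambda$ actually belongs to $K_\Theta^\infty$ so that it is a legitimate test function in the supremum defining $\|A_\phi\|$ even when $\phi$ is merely in $L^2$.
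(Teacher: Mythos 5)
Your argument is correct and is essentially the paper's own proof: testing the quadratic form of $A_{\phi}$ against the normalized reproducing kernel $k_{\lambda}/\norm{k_{\lambda}}$, using $\norm{k_{\lambda}}^2 = (1-|\Theta(\lambda)|^2)/(1-|\lambda|^2)$ and the boundary identity $|k_{\lambda}(z)| = |(\Theta(z)-\Theta(\lambda))/(z-\lambda)|$ on $\dD$. Your explicit remark that $k_{\lambda} \in K_{\Theta}^{\infty}$, so it is a legitimate test vector in the definition of $\norm{A_{\phi}}$, is a small point the paper leaves implicit, but the route is the same.
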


	\begin{proof}
		For $\lambda \in \D$ we have
		\begin{equation}\label{eq-NormKernel}
			\norm{k_{\lambda}} = \sqrt{\frac{1 - |\Theta(\lambda)|^2}{1 - |\lambda|^2}},
		\end{equation}
		from which it follows that
		\begin{align*}
			\norm{A_{\phi}}
			&\geq   \frac{1 - |\lambda|^2}{1 - |\Theta(\lambda)|^2} |\inner{ A_{\phi} k_{\lambda}, k_{\lambda} } | \\
			&=  \frac{1 - |\lambda|^2}{1 - |\Theta(\lambda)|^2} |\inner{ P_{\Theta} \phi k_{\lambda}, k_{\lambda} } | \\
			&=  \frac{1 - |\lambda|^2}{1 - |\Theta(\lambda)|^2} |\inner{ \phi k_{\lambda}, k_{\lambda} }| \\
			&=\frac{1 - |\lambda|^2}{1 - |\Theta(\lambda)|^2} \left|\int_{\dD} \phi(z)
			\left|\frac{\Theta(z) - \Theta(\lambda)}{z - \lambda}\right|^2 \frac{|dz|}{2 \pi}\right|.
		\end{align*}
		That the measures $d\nu_{\lambda}$ are indeed probability measures follows
		from \eqref{eq-NormKernel}.
	\end{proof}
%

	Now observe that if $\Theta(\lambda) = 0$, the argument in the supremum on the left hand side of \eqref{eq-General-Lower}
	becomes the absolute value of the expression in \eqref{PIF}.  This immediately yields the following corollary:

	\begin{Corollary}\label{TheoremHarmonic}
		If $\phi \in L^2$, then
		\begin{equation}\label{eq-HarmonicLower}
			 \sup_{ \lambda \in \Theta^{-1} ( \{0\} ) } | (\mathfrak{P}\phi)(\lambda) |
			 \,\leq \,\norm{A_{\phi}},
		\end{equation}
		where the supremum is to be regarded as $0$ if $\Theta^{-1} ( \{0 \}) = \varnothing$.
	\end{Corollary}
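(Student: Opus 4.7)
The plan is simply to specialize Theorem \ref{PROP-MAIN} to those $\lambda \in \D$ at which $\Theta$ vanishes, and to observe that the weight inside the integral collapses to the Poisson kernel. So if $\Theta^{-1}(\{0\}) = \varnothing$, there is nothing to prove; otherwise, I fix any $\lambda \in \D$ with $\Theta(\lambda) = 0$ and work term by term on the left-hand side of \eqref{eq-General-Lower}.

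First, with $\Theta(\lambda) = 0$, the prefactor becomes
\[
\frac{1 - |\lambda|^2}{1 - |\Theta(\lambda)|^2} = 1 - |\lambda|^2,
\]
and the kernel simplifies to
\[
\left|\frac{\Theta(z) - \Theta(\lambda)}{z - \lambda}\right|^2 = \frac{|\Theta(z)|^2}{|z - \lambda|^2}.
\]
The key point I would then invoke is that $\Theta$ is inner, so $|\Theta(z)| = 1$ for a.e.\ $z \in \dD$; hence the above quantity equals $|z - \lambda|^{-2}$ a.e.\ on $\dD$. Multiplying by the prefactor $1 - |\lambda|^2$ produces exactly the Poisson kernel appearing in the definition \eqref{PIF} of $\mathfrak{P}\phi$.

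Consequently the left-hand side of \eqref{eq-General-Lower}, when restricted to the subset $\Theta^{-1}(\{0\}) \subset \D$, becomes
\[
\sup_{\lambda \in \Theta^{-1}(\{0\})} \left| \int_{\dD} \phi(z)\, \frac{1 - |\lambda|^2}{|z - \lambda|^2}\, \frac{|dz|}{2\pi} \right| \;=\; \sup_{\lambda \in \Theta^{-1}(\{0\})} |(\mathfrak{P}\phi)(\lambda)|,
\]
which is bounded above by $\|A_\phi\|$ by Theorem \ref{PROP-MAIN}. Since restricting the supremum over $\D$ to the subset $\Theta^{-1}(\{0\})$ can only decrease it, the inequality \eqref{eq-HarmonicLower} follows.

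There is no genuine obstacle here; the corollary really is a one-line specialization of Theorem \ref{PROP-MAIN}. The only mild subtlety worth spelling out is the use of $|\Theta| = 1$ a.e.\ on $\dD$ to turn the difference quotient $|(\Theta(z) - \Theta(\lambda))/(z-\lambda)|^2$ into $|z - \lambda|^{-2}$; everything else is algebraic cancellation.
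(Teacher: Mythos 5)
Your proof is correct and is essentially the paper's own argument: the paper likewise obtains the corollary by specializing Theorem \ref{PROP-MAIN} to $\lambda \in \Theta^{-1}(\{0\})$, where the integrand reduces to the Poisson kernel (you merely make explicit the step that $|\Theta| = 1$ a.e.\ on $\dD$, which the paper leaves implicit).
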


	Under the right circumstances, the preceding corollary can be used to prove that certain truncated
	Toeplitz operators are norm-attaining:

	\begin{Corollary}\label{CorollaryAccumulate}
		Let $\Theta$ be an inner function having zeros which accumulate at every point of $\dD$.
		If $\phi \in C(\dD)$  then $\norm{A_{\phi}} = \norm{\phi}_{\infty}$.
	\end{Corollary}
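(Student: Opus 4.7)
The plan is to combine the lower bound from Corollary \ref{TheoremHarmonic} with the well-known boundary behavior of the Poisson extension of a continuous function, and then to compare with the trivial upper bound $\norm{A_\phi} \leq \norm{\phi}_\infty$ (valid since $C(\dD) \subset L^\infty$).

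First I would invoke the solution to the classical Dirichlet problem: for $\phi \in C(\dD)$, the Poisson extension $\mathfrak{P}\phi$ extends continuously to $\overline{\D}$, with boundary values equal to $\phi$. Since $\dD$ is compact and $\phi$ is continuous, there exists a point $\zeta_0 \in \dD$ where $|\phi(\zeta_0)| = \norm{\phi}_\infty$.

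Next I would use the hypothesis that the zero set $\Theta^{-1}(\{0\})$ accumulates at every point of $\dD$: in particular there is a sequence $\lambda_n \in \Theta^{-1}(\{0\})$ with $\lambda_n \to \zeta_0$. By continuity of $\mathfrak{P}\phi$ up to the boundary,
\begin{equation*}
    |(\mathfrak{P}\phi)(\lambda_n)| \longrightarrow |\phi(\zeta_0)| = \norm{\phi}_\infty.
\end{equation*}
Applying Corollary \ref{TheoremHarmonic} gives $|(\mathfrak{P}\phi)(\lambda_n)| \leq \norm{A_\phi}$ for each $n$, and passing to the limit yields $\norm{\phi}_\infty \leq \norm{A_\phi}$. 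The reverse inequality $\norm{A_\phi} \leq \norm{\phi}_\infty$ is the elementary estimate already noted in the introduction, so equality holds.

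There is no real obstacle here; the only point requiring a moment of care is the continuity of $\mathfrak{P}\phi$ at boundary points, but this is the standard Fatou-type statement for continuous boundary data which the paper has already flagged in the paragraph following \eqref{PIF}.
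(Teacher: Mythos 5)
Your argument is correct and is essentially the paper's own proof: both select a boundary point $\zeta_0$ where $|\phi|$ attains $\norm{\phi}_\infty$, use the accumulation of zeros of $\Theta$ at $\zeta_0$ together with Corollary~\ref{TheoremHarmonic} and the continuity of $\mathfrak{P}\phi$ up to $\dD$ to get $\norm{\phi}_\infty \leq \norm{A_\phi}$, and finish with the trivial bound $\norm{A_\phi} \leq \norm{\phi}_\infty$. Nothing further is needed.
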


	\begin{proof}
		Let $\zeta \in \dD$ be such that
		 $|\phi(\zeta)| = \norm{\phi}_{\infty}$.  By hypothesis, there
		exists a sequence $\lambda_n$ of zeros of $\Theta$ which
		converge to $\zeta$.  By continuity, we conclude that
		\begin{equation*}
			\norm{\phi}_{\infty} = \lim_{n\to\infty} |(\mathfrak{P} \phi)(\lambda_n)|
			\leq \norm{A_{\phi}} \leq \norm{\phi}_{\infty}
		\end{equation*}
		whence $\norm{A_{\phi}} = \norm{\phi}_{\infty}$.
	\end{proof}
	
	The preceding corollary stands in contrast to the finite Blaschke product setting.
	Indeed, if $\Theta$ is a finite Blaschke product and $\phi \in H^{\infty}$, then it is known that
	$\|A_{\phi}\| = \|\phi\|_{\infty}$ if and only if $\phi$ is the scalar multiple of the
	inner factor of some function from $K_{\Theta}$ \cite[Thm.~2]{NLEPHS}.

	At the expense of wordiness, the hypothesis of Corollary \ref{CorollaryAccumulate} can be considerably
	weakened.  A cursory examination of the proof indicates that we only need $\zeta$
	to be a limit point of the zeros of $\Theta$, $\phi \in L^{\infty}$ to be continuous on an
	open arc containing $\zeta$, and $|\phi(\zeta)| = \norm{ \phi}_{\infty}$.
	
	\medskip
	
	Theorem \ref{PROP-MAIN} yields yet another lower bound for $\norm{A_{\phi}}$.
	Recall that an inner function $\Theta$ has a finite angular derivative at $\zeta \in \dD$
	if $\Theta$ has a non-tangential limit $\Theta(\zeta)$ of modulus one at $\zeta$
	and $\Theta'$ has a finite non-tangential limit $\Theta'(\zeta)$ at $\zeta$. This is
	equivalent to asserting that
	\begin{equation} \label{RKC}
		\frac{\Theta(z) - \Theta(\zeta)}{z - \zeta}
	\end{equation}
	has the non-tangential limit $\Theta'(\zeta)$ at $\zeta$.
	If $\Theta$ has a finite angular derivative at $\zeta$, then the function in \eqref{RKC} belongs to $H^2$ and
	\begin{equation*}
		\lim_{r \to 1^{-}} \int_{\dD} \left|\frac{\Theta(z) - \Theta(r \zeta)}{z - r \zeta}\right|^2 \frac{|d z|}{2 \pi}
		= \int_{\dD} \left|\frac{\Theta(z) - \Theta(\zeta)}{z - \zeta}\right|^2 \frac{|d z|}{2 \pi}.
	\end{equation*}
	Furthermore, the above is equal to
	\begin{equation*}
		\lim_{r \to 1^{-}} \frac{1 - |\Theta(r \zeta)|^2}{1 - r^2} = |\Theta'(\zeta)| > 0.
	\end{equation*}

	See \cite{CMR, MR1289670} for further details on angular derivatives.
	Theorem \ref{PROP-MAIN} along with the preceding discussion and
	Fatou's lemma yield the following lower estimate for $\norm{A_{\phi}}$.

	\begin{Corollary}
		For an inner function $\Theta$, let $D_{\Theta}$ be the set of $\zeta \in \dD$
		for which $\Theta$ has a finite angular derivative $\Theta'(\zeta)$ at $\zeta$.
		If $\phi \in L^{\infty}$ or if $\phi \in L^2$ with $\phi \geq 0$, then
		\begin{equation*}
			\sup_{\zeta \in D_{\Theta}} \frac{1}{|\Theta'(\zeta)|} \left|\int_{\dD} \phi(z)
			\left|\frac{\Theta(z) - \Theta(\zeta)}{z - \zeta}\right|^2 \frac{|dz|}{2 \pi}\right| \leq \norm{A_{\phi}}.
		\end{equation*}
			In other words,
			\begin{equation*}
				\sup_{\zeta \in D_{\Theta}} \left| \int_{\dD} \phi(z) d\nu_{\lambda}(z) \right| \leq \norm{A_{\phi}},
			\end{equation*}
			where
			\begin{equation*}
				d\nu_{\lambda}(z) :=  \frac{1}{|\Theta'(\zeta)|}
				\left|\frac{\Theta(z) - \Theta(\zeta)}{z - \zeta}\right|^2 \frac{|dz|}{2 \pi}
			\end{equation*}
			is a family of probability measures on $\dD$ indexed by $\zeta \in D_{\Theta}$.
	\end{Corollary}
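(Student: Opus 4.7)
The plan is to deduce the corollary from Theorem \ref{PROP-MAIN} by approaching each $\zeta \in D_\Theta$ radially. Fix $\zeta \in D_\Theta$ and apply Theorem \ref{PROP-MAIN} at $\lambda = r\zeta$ for $r \in (0,1)$ to obtain
\begin{equation*}
\frac{1-r^2}{1-|\Theta(r\zeta)|^2}\left|\int_{\dD} \phi(z) \left|\frac{\Theta(z) - \Theta(r\zeta)}{z - r\zeta}\right|^2 \frac{|dz|}{2\pi}\right| \,\leq\, \norm{A_{\phi}}.
\end{equation*}
The discussion preceding the corollary already identifies the outer prefactor as tending to $1/|\Theta'(\zeta)|$ as $r \to 1^-$, so the crux is to pass to the limit inside the integral.

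Write $K_r(z) := |(\Theta(z)-\Theta(r\zeta))/(z-r\zeta)|^2$ and $K_\infty(z) := |(\Theta(z)-\Theta(\zeta))/(z-\zeta)|^2$. Since $\Theta$ has a non-tangential limit $\Theta(\zeta)$ of modulus one at $\zeta$, one has $K_r(z) \to K_\infty(z)$ for almost every $z \in \dD$. When $\phi \geq 0$ the integrands are non-negative and Fatou's lemma yields
\begin{equation*}
\int_{\dD} \phi\, K_\infty\, \frac{|dz|}{2\pi} \,\leq\, \liminf_{r \to 1^-}\int_{\dD}\phi\, K_r\, \frac{|dz|}{2\pi}.
\end{equation*}
Using $\liminf(a_r b_r) \geq (\lim a_r)(\liminf b_r)$ when $a_r \to a > 0$ and $b_r \geq 0$, and combining with Theorem \ref{PROP-MAIN}, one obtains the asserted inequality at this $\zeta$.

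For the case $\phi \in L^\infty$ (possibly complex-valued), Fatou's lemma does not apply directly, and I would instead upgrade almost-everywhere convergence of $K_r$ to $L^1(\dD)$ convergence. By \eqref{eq-NormKernel}, the $L^1$ norm of $K_r$ equals $(1-|\Theta(r\zeta)|^2)/(1-r^2)$, which tends to $|\Theta'(\zeta)|$; by the angular derivative identity quoted in the excerpt, this is also the $L^1$ norm of $K_\infty$. Scheff\'e's lemma then promotes almost-everywhere convergence of the non-negative $K_r$ to $L^1$ convergence, so $\int_{\dD} \phi K_r \frac{|dz|}{2\pi} \to \int_{\dD} \phi K_\infty \frac{|dz|}{2\pi}$ for bounded $\phi$, and the inequality follows after multiplying by the converging prefactor and sending $r \to 1^-$. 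The main subtlety is precisely this $L^\infty$ case, where one must rule out concentration of the kernels away from $\zeta$ in the limit; the matching $L^1$ norms accomplish this. Finally, taking the supremum over $\zeta \in D_\Theta$ gives the stated bound, and each $\nu_\zeta$ has total mass $\|K_\infty\|_1/|\Theta'(\zeta)| = 1$, so the $\nu_\zeta$ are indeed probability measures.
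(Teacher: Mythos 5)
Your argument is correct and follows essentially the same route as the paper: apply Theorem \ref{PROP-MAIN} at the radial points $\lambda = r\zeta$, use the angular-derivative facts recorded just before the corollary, and pass to the limit via Fatou's lemma. The only addition is your Scheff\'e-lemma step promoting a.e.\ convergence of the kernels to $L^1$ convergence for the $\phi \in L^{\infty}$ case, which is a legitimate (and welcome) filling-in of a detail the paper's one-line proof leaves implicit.
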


\section{Lower bounds and projections}

	Our next several results concern lower bounds on $\norm{A_{\phi}}$
	involving the orthogonal projection $P_{\Theta}: L^2 \to K_{\Theta}$.

	\begin{Theorem}\label{TheoremLowerBound}
		If $\Theta$ is an inner function and $\phi \in L^2$, then
		\begin{equation*}
			\frac{  \norm{ P_{\Theta} (\phi) - \overline{\Theta(0)} P_{\Theta} (\Theta \phi)}}
			 { ( 1 - |\Theta(0)|^2 )^{\frac{1}{2}} }  \, \leq \,
			 \norm{A_{\phi}}.
		\end{equation*}
	\end{Theorem}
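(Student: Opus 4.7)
The plan is to test the operator on the reproducing kernel $k_0$ at $\lambda = 0$, for which the formula simplifies dramatically. Setting $\lambda = 0$ in the definition of $k_\lambda$ gives $k_0(z) = 1 - \overline{\Theta(0)} \Theta(z)$, which is obviously bounded on $\dD$ (indeed $\|k_0\|_\infty \leq 1 + |\Theta(0)|$), so $k_0 \in K_\Theta^\infty$ and is therefore a legitimate test vector in the supremum defining $\|A_\phi\|$. From \eqref{eq-NormKernel} with $\lambda = 0$ (or by direct expansion) we have $\|k_0\|^2 = 1 - |\Theta(0)|^2$.

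Next I would observe that
\begin{equation*}
A_\phi k_0 = P_\Theta(\phi k_0) = P_\Theta\bigl(\phi(1 - \overline{\Theta(0)}\Theta)\bigr) = P_\Theta(\phi) - \overline{\Theta(0)}\, P_\Theta(\Theta \phi),
\end{equation*}
using linearity of $P_\Theta$. Then the elementary bound
\begin{equation*}
\|A_\phi\| \geq \frac{\|A_\phi k_0\|}{\|k_0\|}
\end{equation*}
immediately gives the desired inequality, since the numerator is $\|P_\Theta(\phi) - \overline{\Theta(0)} P_\Theta(\Theta \phi)\|$ and the denominator is $(1-|\Theta(0)|^2)^{1/2}$.

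There is essentially no obstacle: the only mildly subtle point is confirming that $k_0$ is a permitted test vector in \eqref{eq-NormDef} (i.e.\ that $k_0 \in K_\Theta^\infty$), which is immediate from the boundedness of $\Theta$ on $\dD$. The rest is a one-line computation using the reproducing kernel formula and the linearity of $P_\Theta$.
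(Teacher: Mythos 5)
Your proof is correct and is essentially the paper's argument: both test $A_\phi$ on the reproducing kernel $k_0 = 1-\overline{\Theta(0)}\Theta$, use $\|k_0\| = (1-|\Theta(0)|^2)^{1/2}$, and expand $P_\Theta(\phi k_0) = P_\Theta(\phi)-\overline{\Theta(0)}P_\Theta(\Theta\phi)$. The paper merely phrases the final step dually, pairing $A_\phi k_0$ against an optimally chosen unit vector $g \in K_\Theta$ instead of invoking $\|A_\phi\| \geq \|A_\phi k_0\|/\|k_0\|$ directly, which is the same estimate.
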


	\begin{proof}
		First observe that $\norm{k_0} = ( 1 - |\Theta(0)|^2  )^{\frac{1}{2}}$.
		Next we see that if $\phi \in L^2$ and $g \in K_{\Theta}$ is any unit vector, then
		\begin{align*}
			( 1 - |\Theta(0)|^2 )^{\frac{1}{2}} \norm{A_{\phi}}
			&\geq  | \inner{ A_{\phi} k_0, g} |  \\
			&=  | \inner{ P_{\Theta} (\phi k_0),g} | \\
			&=  | \inner{ P_{\Theta} (\phi) - \overline{\Theta(0)} P_{\Theta}(\Theta \phi),g} |.
		\end{align*}
		Setting
		\begin{equation*}
			g = \frac{P_{\Theta} (\phi) - \overline{\Theta(0)} P_{\Theta} (\Theta \phi)}
			{\norm{P_{\Theta} (\phi) - \overline{\Theta(0)} P_{\Theta} (\Theta \phi)}}
		\end{equation*}
		yields the desired inequality.
	\end{proof}
	
	In light of the fact that $P_{\Theta}(\Theta \phi) = 0$ whenever $\phi \in H^2$,
	Theorem \ref{TheoremLowerBound} leads us immediately to the following corollary:

	\begin{Corollary} \label{Cor-main1}
		If $\Theta$ is inner and $\phi \in H^2$, then
		\begin{equation} \label{eq-Cor-main1}
			\frac{\norm{P_{\Theta} (\phi)}}{(1 - |\Theta(0)|^2)^{1/2}} \leq \norm{A_{\phi}}.
		\end{equation}
	\end{Corollary}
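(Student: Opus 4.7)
The plan is simply to invoke Theorem \ref{TheoremLowerBound} and to observe that the second term in the numerator on the left-hand side vanishes when $\phi \in H^2$. Specifically, given $\phi \in H^2$, the product $\Theta \phi$ lies in $\Theta H^2$. Since the orthogonal decomposition $H^2 = K_{\Theta} \oplus \Theta H^2$ shows that $\Theta H^2 \perp K_{\Theta}$ inside $L^2$, we conclude that
\begin{equation*}
	P_{\Theta}(\Theta \phi) = 0.
\end{equation*}

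Substituting this identity into the conclusion of Theorem \ref{TheoremLowerBound} collapses the numerator to $\|P_{\Theta}(\phi)\|$, which yields the desired bound \eqref{eq-Cor-main1}. There is no obstacle here; the content of the corollary is entirely packaged in Theorem \ref{TheoremLowerBound}, and the role of the hypothesis $\phi \in H^2$ is only to kill off the $\overline{\Theta(0)} P_{\Theta}(\Theta \phi)$ correction term.
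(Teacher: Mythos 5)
Your proof is correct and is exactly the paper's argument: the corollary is stated as an immediate consequence of Theorem \ref{TheoremLowerBound} together with the observation that $P_{\Theta}(\Theta\phi)=0$ for $\phi\in H^2$, since $\Theta\phi\in\Theta H^2\perp K_{\Theta}$. Nothing further is needed.
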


	It turns out that \eqref{eq-Cor-main1} has a rather interesting function-theoretic
	implication.  Let us first note that for $\phi \in H^{\infty}$,  we can expect no better inequality than
	\begin{equation*}
		\norm{\phi} \leq \norm{\phi}_{\infty}
	\end{equation*}
	(with equality holding if and only if $\phi$ is a scalar multiple of an inner function).  However, if
	$\phi$ belongs to $K_{\Theta}^{\infty}$, then a stronger inequality holds.

	\begin{Corollary}\label{CorollaryOddRelationship}
		If $\Theta$ is an inner function, then
		 \begin{equation}\label{eq-ModelFunctionInequality}
			 \norm{\phi} \,\leq\, ( 1 - |\Theta(0)|^2 )^{\frac{1}{2}} \norm{\phi }_{\infty}
		\end{equation}
		holds for all $\phi \in K_{\Theta}^{\infty}$.
		If $\Theta$ is a finite Blaschke product, then equality holds
		if and only if
		$\phi$ is a scalar multiple of an inner function from $K_{\Theta}$.
	\end{Corollary}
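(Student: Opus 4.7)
The inequality \eqref{eq-ModelFunctionInequality} is a direct consequence of Corollary \ref{Cor-main1}. Since $\phi \in K_{\Theta}^{\infty}$ lies in $K_{\Theta} \cap L^{\infty}$, we have $P_{\Theta} \phi = \phi$, so Corollary \ref{Cor-main1} reads $\norm{\phi} \leq (1 - |\Theta(0)|^2)^{1/2} \norm{A_{\phi}}$. Coupling this with the elementary upper bound $\norm{A_{\phi}} \leq \norm{\phi}_{\infty}$, available because $\phi \in L^{\infty}$, produces \eqref{eq-ModelFunctionInequality} after a trivial rearrangement.

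For the equality clause with $\Theta$ a finite Blaschke product, equality in \eqref{eq-ModelFunctionInequality} forces both intermediate inequalities in the chain $\norm{\phi} \leq (1 - |\Theta(0)|^2)^{1/2} \norm{A_\phi} \leq (1 - |\Theta(0)|^2)^{1/2} \norm{\phi}_{\infty}$ to be tight; in particular $\norm{A_{\phi}} = \norm{\phi}_{\infty}$. I would then appeal to \cite[Thm.~2]{NLEPHS}, which characterizes this norm-attainment as $\phi$ being a scalar multiple of the inner factor of some function in $K_{\Theta}$. Because $\phi$ itself lies in $K_{\Theta}$, writing $\phi = c I$ with $I$ inner immediately gives $I = \phi/c \in K_{\Theta}$, matching the stated form. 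Conversely, if $\phi = c I$ with $I$ inner and $I \in K_{\Theta}$, then a direct computation gives $\norm{\phi} = |c| = \norm{\phi}_{\infty}$; this is consistent with \eqref{eq-ModelFunctionInequality} being an equality because the presence of any nonzero inner function $I \in K_{\Theta}$ already forces $\Theta(0) = 0$ (the containment $I \perp \Theta H^{2}$ together with $|I| = 1$ a.e.\ makes $\Theta/I$ a bounded analytic function vanishing at the origin).

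The main obstacle is the equality analysis. The forward direction relies entirely on the norm-attainment characterization from \cite{NLEPHS}, and one needs to verify that the two separate equality conditions (in Corollary \ref{Cor-main1} and in the trivial upper bound) combine exactly into the clean statement about inner functions genuinely lying in $K_{\Theta}$. The reverse direction and the inequality itself are essentially bookkeeping.
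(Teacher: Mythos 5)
Your proof is correct and takes essentially the same route as the paper: the inequality follows from Corollary \ref{Cor-main1} together with $P_{\Theta}\phi=\phi$ and the trivial bound $\norm{A_{\phi}}\leq\norm{\phi}_{\infty}$, and the forward equality direction invokes \cite[Thm.~2]{NLEPHS} exactly as the paper does. You additionally verify the converse direction (which the paper leaves implicit) via the correct observation that any inner $I\in K_{\Theta}$ makes $\Theta/I$ inner with a zero at the origin, forcing $\Theta(0)=0$ and hence genuine equality; this is a sound and worthwhile completion of the argument.
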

	
	\begin{proof}
		First observe that the inequality
		\begin{equation*}
			\norm{ \phi  } \leq ( 1 - |\Theta(0)|^2 )^{\frac{1}{2}} \norm{\phi }_{\infty}
		\end{equation*}
		follows from Corollary \ref{Cor-main1} and the fact that $P_{\Theta}\phi = \phi$
		whenever $\phi \in K_{\Theta}$.  Now suppose that $\Theta$ is a finite Blaschke product
		and assume that equality
		holds in the preceding for some $\phi \in K_{\Theta}^{\infty}$.  In light of \eqref{eq-Cor-main1},
		it follows that $\norm{A_{\phi}} = \norm{\phi}_{\infty}$.   From \cite[Thm.~2]{NLEPHS}
		we see that $\phi$ must be a scalar multiple of the inner \textit{part} of a function
		from $K_{\Theta}$. But since $\phi \in K_{\Theta}^{\infty}$, then $\phi$ must be a scalar multiple of an inner function from $K_{\Theta}$.
	\end{proof}
	
	When $\Theta$ is a finite Blaschke product, then $K_{\Theta}$ is a
	finite dimensional subspace of $H^2$ consisting of bounded
	functions \cite{MR2440673, G,Sarason}. By elementary functional analysis, there are $c_1, c_2 > 0$ so that
	\begin{equation*}
		c_1 \norm{\phi} \leq \norm{\phi}_{\infty} \leq c_2 \norm{\phi}
	\end{equation*}
	for all $\phi \in K_{\Theta}$. This prompts the following question:

	\begin{Question}
		What are the optimal constants $c_1, c_2$ in the above inequality?
	\end{Question}

\section{Lower bounds from the decomposition of $K_{\Theta}$}

	A result of Sarason \cite[Thm.~3.1]{Sarason} says, for $\phi \in L^2$, that
	\begin{equation} \label{zero-operator}
		A_{\phi} \equiv 0 \Leftrightarrow \phi \in \Theta H^2 + \overline{\Theta H^2}.
	\end{equation}
	 It follows that
	the most general truncated Toeplitz operator on $K_{\Theta}$ is of the form
	$A_{\psi+ \overline{\chi}}$ where $\psi, \chi \in K_{\Theta}$.
We can refine this observation a bit further and provide another canonical decomposition
	for the symbol of a truncated Toeplitz  operator.

	\begin{Lemma}\label{LemmaSymbol}
		Each bounded truncated Toeplitz operator on $K_{\Theta}$ is generated by a symbol of the form
		\begin{equation}\label{eq-SpecialForm}
			\phi = \underbrace{ \psi }_{\in H^2} + \underbrace{ \chi \overline{\Theta} }_{\in \overline{zH^2}}
		\end{equation}
		where $\psi, \chi \in K_{\Theta}$.
	\end{Lemma}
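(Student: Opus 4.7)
The plan is to start from the canonical decomposition recorded just above the lemma: by Sarason's identification \eqref{zero-operator} of the kernel of $\phi\mapsto A_\phi$, every bounded truncated Toeplitz operator is generated by a symbol $\phi_0=\psi_0+\overline{\chi_0}$ with $\psi_0,\chi_0\in K_\Theta$. I will then adjust $\phi_0$ by an element of $\Theta H^2+\overline{\Theta H^2}$ (which, by \eqref{zero-operator}, does not change $A_{\phi_0}$) in order to force the coanalytic part into the form $\chi\overline{\Theta}$ with $\chi\in K_\Theta$.

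The crucial observation to establish first is that $\overline{f}\Theta\in H^2$ for every $f\in K_\Theta$. This follows directly from the defining orthogonality $f\perp\Theta H^2$: for each $h\in H^2$,
\begin{equation*}
0 = \inner{f,\Theta h} = \int_{\dD} (f\overline{\Theta})\,\overline{h}\,\frac{|d\zeta|}{2\pi},
\end{equation*}
so $f\overline{\Theta}\in(H^2)^{\perp}=\overline{zH^2}$, and conjugating on $\dD$ gives $\overline{f}\Theta\in zH^2\subset H^2$. Applying this with $f=\chi_0$ and invoking the orthogonal decomposition $H^2=K_\Theta\oplus\Theta H^2$, I would write $\overline{\chi_0}\Theta=\chi+\Theta h$ with $\chi\in K_\Theta$ and $h\in H^2$. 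Multiplying through by $\overline{\Theta}$ (of modulus one on $\dD$) yields $\overline{\chi_0}=\chi\overline{\Theta}+h$, and hence
\begin{equation*}
\phi_0 = (\psi_0+h) + \chi\overline{\Theta}.
\end{equation*}

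A second application of $H^2=K_\Theta\oplus\Theta H^2$ to the $H^2$ function $\psi_0+h$ gives $\psi_0+h=\psi+\Theta g$ with $\psi\in K_\Theta$ and $g\in H^2$, so that $\phi_0=\psi+\chi\overline{\Theta}+\Theta g$. Since $\Theta g\in\Theta H^2\subset\Theta H^2+\overline{\Theta H^2}$, the relation \eqref{zero-operator} gives $A_{\phi_0}=A_{\psi+\chi\overline{\Theta}}$, and $\psi+\chi\overline{\Theta}$ is a symbol of the required shape \eqref{eq-SpecialForm}, with the two summands automatically sitting in $H^2$ and $\overline{zH^2}$ respectively (the latter because the boundary identity above applied to $\chi$ gives $\chi\overline{\Theta}\in\overline{zH^2}$).

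The main obstacle is spotting the boundary identity $\overline{f}\Theta\in H^2$ for $f\in K_\Theta$; once this is in hand the argument is essentially bookkeeping with two applications of the standard orthogonal decomposition of $H^2$. One could alternatively phrase the same manipulation via the natural conjugation $Cf=\overline{zf}\,\Theta$ on $K_\Theta$, since $\overline{\chi_0}=z\,\overline{\Theta}\,(C\chi_0)$ on $\dD$ and $z\cdot C\chi_0\in H^2$, but this is cosmetic and the direct orthogonality computation is cleaner.
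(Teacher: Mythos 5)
Your proof is correct and takes essentially the same route as the paper: reduce the symbol modulo $\Theta H^2 + \overline{\Theta H^2}$ and rewrite the coanalytic part as $\chi\overline{\Theta}$ with $\chi \in K_{\Theta}$. The only difference is packaging --- the paper reduces to $f + \overline{zg}$ with $f,g \in K_{\Theta}$ and finishes in one line via the conjugation $Cg = \overline{zg}\,\Theta$, whereas you prove the underlying fact $f\overline{\Theta} \in \overline{zH^2}$ for $f \in K_{\Theta}$ directly from orthogonality and use one extra projection, which, as you note yourself, is cosmetic.
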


	Before getting to the proof, we should remind the reader of a technical detail.
	It follows from the identity $K_{\Theta} = H^2 \cap \Theta \overline{z H^2}$ (see \cite[p.~82]{CR}) that
	\begin{equation*}
	C: K_{\Theta} \to K_{\Theta}, \quad C f := \overline{z f} \Theta,
	\end{equation*}
	is an isometric, conjugate-linear, involution. In fact,
	when $A_{\phi}$ is a bounded operator we have the identity $C A_{\phi} C = A_{\phi}^{*}$ \cite[Lemma 2.1]{Sarason}.

	\begin{proof}[Proof of Lemma \ref{LemmaSymbol}]
		If $T$ is a bounded truncated Toeplitz operator on $K_{\Theta}$, then there exists some
		$\phi \in L^2$ such that $T = A_{\phi}$.  We claim that this $\phi$ can be chosen to have the special form
		\eqref{eq-SpecialForm}.  First let us write $\phi = f + \overline{zg}$ where $f,g \in H^2$.
		Using the orthogonal decomposition $H^2 = K_{\Theta} \oplus \Theta H^2$, it follows that $\phi$
		may be further decomposed as
		\begin{equation*}
			\phi = (f_1 + \Theta f_2) + \overline{z(g_1 + \Theta g_2)}
		\end{equation*}
		where $f_1, g_1 \in K_{\Theta}$ and $f_2,g_2 \in H^2$.
		By \eqref{zero-operator}, the symbols $\Theta f_2$ and
		$\overline{ \Theta(zg_2) }$ yield the zero truncated Toeplitz operator on $K_{\Theta}$.
		Therefore we may assume that
		\begin{equation*}
			\phi = f + \overline{zg}
		\end{equation*}
		for some $f,g \in K_{\Theta}$.
		Since $C g = \overline{gz}\Theta$, we have
		$\overline{zg} = (C g) \overline{\Theta}$
		and hence \eqref{eq-SpecialForm} holds with
		$\psi = f$ and $\chi = C g$.
	\end{proof}

	\begin{Corollary}\label{PropMixed}
		Let $\Theta$ be an inner function.  If $\psi_1, \psi_2 \in K_{\Theta}$
		and $\phi = \psi_1 + \psi_2\overline{\Theta}$, then
		\begin{equation*}
			\frac{ \| \psi_1 - \overline{ \Theta(0) } \psi_2\|}{ (1 - |\Theta(0)|^2)^{\frac{1}{2}}} \leq \norm{A_{\phi}}.
		\end{equation*}
	\end{Corollary}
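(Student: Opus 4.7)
The plan is to apply Theorem \ref{TheoremLowerBound} directly to $\phi = \psi_1 + \psi_2 \overline{\Theta}$, which reduces the problem to computing the two projections $P_\Theta(\phi)$ and $P_\Theta(\Theta \phi)$ explicitly and checking that they simplify to $\psi_1$ and $\psi_2$ respectively. Once that is done, the bound in Theorem \ref{TheoremLowerBound} becomes the claimed inequality verbatim.

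For $P_\Theta(\phi)$: since $\psi_1 \in K_\Theta$ we immediately get $P_\Theta(\psi_1) = \psi_1$, so the only work is to show that $P_\Theta(\psi_2 \overline{\Theta}) = 0$. The key observation is that $\psi_2 \in K_\Theta$ means $\psi_2 \perp \Theta H^2$ in $H^2$, i.e. $\langle \psi_2, \Theta h\rangle = \langle \overline{\Theta}\psi_2, h\rangle = 0$ for every $h \in H^2$. Hence $\overline{\Theta}\psi_2 \in (H^2)^\perp = \overline{zH^2}$, and since $K_\Theta \subset H^2$, its projection onto $K_\Theta$ vanishes. This shows $P_\Theta(\phi) = \psi_1$.

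For $P_\Theta(\Theta \phi) = P_\Theta(\Theta \psi_1) + P_\Theta(\psi_2)$: the first term is zero because $\Theta \psi_1 \in \Theta H^2$ and $\Theta H^2 \perp K_\Theta$ by the very definition of the model space, while the second term equals $\psi_2$ since $\psi_2 \in K_\Theta$. Thus $P_\Theta(\Theta \phi) = \psi_2$.

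Plugging these two identities into Theorem \ref{TheoremLowerBound} gives
\begin{equation*}
    \frac{\|\psi_1 - \overline{\Theta(0)}\, \psi_2\|}{(1 - |\Theta(0)|^2)^{1/2}} \leq \|A_\phi\|,
\end{equation*}
which is the desired bound. There is no real obstacle here; the only subtle step is the identity $\overline{\Theta}\psi_2 \in \overline{zH^2}$, which is essentially a restatement of $\psi_2 \perp \Theta H^2$ and explains why decomposing a general symbol in the canonical form of Lemma \ref{LemmaSymbol} is a convenient setup for extracting lower bounds via Theorem \ref{TheoremLowerBound}.
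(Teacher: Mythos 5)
Your proof is correct and follows essentially the same route as the paper: both apply Theorem \ref{TheoremLowerBound} after observing that $\psi_2\overline{\Theta} \in \overline{zH^2}$ (so it is annihilated by $P_{\Theta}$) and that $\Theta\psi_1 \in \Theta H^2$, giving $P_{\Theta}(\phi) - \overline{\Theta(0)}P_{\Theta}(\Theta\phi) = \psi_1 - \overline{\Theta(0)}\psi_2$. Your version simply spells out the projection computations that the paper leaves implicit.
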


	\begin{proof}
		If $\phi = \psi_1 + \psi_2\overline{\Theta}$, then, since $\psi_1, \psi_2 \in K_{\Theta}$
		and $\psi_{2} \overline{\Theta} \in \overline{z H^2}$, we have
		\begin{equation*}
			P_{\Theta} (\phi) - \overline{\Theta(0)} P_{\Theta} (\Theta \phi) = \psi_1 - \overline{\Theta(0)} \psi_2.
		\end{equation*}
		The result now follows from Theorem \ref{TheoremLowerBound}.
	\end{proof}

\bibliography{NTTO}

\end{document}